\newcommand{\N}{\mathbb{N}}
\newcommand{\R}{\mathbb{R}}
\newcommand{\Rb}{\overline{\mathbb{R}}}
\newcommand {\F} {\mathcal{F}} 
\newcommand{\e}{{\rm e-\!}}
\newcommand{\M}{{\rm M-\!}}
\newcommand{\ps}{\smallbreak}
\newcommand{\tos}{\rightrightarrows}
\newtheorem{theo}{Theorem}[section]
\newtheorem{prop}[theo]{Proposition}
\newtheorem{fact}[theo]{Fact}
\newtheorem{cor}[theo]{Corollary}
\begin{document}
\thispagestyle{empty}

\title[Limits of maximal monotone operators]{Limits of maximal monotone operators driven by their representative functions}
\date{}
\author{Yboon Garc\'{\i}a}
\address{CIUP - Centro de Investigaci\'on de la Universidad del Pacı\'\i fico, Lima,
Per\'u}
\email{garcia\_yv@up.edu.pe}
\author{Marc Lassonde}
\address{Universit\'e des Antilles, Pointe \`a Pitre, and LIMOS, Clermont-Ferrand,
France}
\email{marc.lassonde@univ-antilles.fr}

\subjclass[2010]{47H05, 49J52, 47N10}
\keywords{Maximal monotone operator, convex function, representative function,
epi-convergence, Mosco-convergence, subdifferential}

\begin{abstract}
In a previous paper, the authors showed that in a reflexive Banach space the 
lower limit of a sequence of maximal monotone operators
is always representable by a convex function. The present paper gives
precisions to the latter result by demonstrating
the continuity of the representation
with respect to the epi-convergence of the representative functions,
and
the stability of the class of maximal monotone operators with respect
to the Mosco-convergence of their representative functions.
\end{abstract}

\maketitle
\section{Introduction}
Let $X$ be a real Banach space.
We denote by $X^*$ its topological dual and by 
$\langle .,.\rangle$ the duality product in $X\times X^*$,
that is,
$$\langle x,x^*\rangle:=x^*(x).$$
The product space $X\times X^*$ is assumed to be equipped with the
norm topology. Whenever necessary, the space $X$ is considered as a subspace
of $X^{**}$, so that $X^*\times X$ is a subspace the topological dual of $X\times X^*$.
The \textit{indicator function} of $A \subset X$ is the function
$\delta_A : X \to \R \cup \{+ \infty\}$ given by
$$
\delta_A(x) = \left\{ \begin{array}{cl}
         0    &  \mbox{if}\quad x\in A\\
        +\infty &  \mbox{if}\quad x\not\in A,
       \end{array}
                                 \right.
$$

We use the notation $\Gamma (X)$ for the space of all
lower semicontinuous (lsc) convex functions from $X$ into the extended real line
$\R \cup \{+ \infty\}$.
The \textit{conjugate} of $f\in \Gamma (X)$ is the function
$f^*:X^*\to \Rb$ given by
$$
f(x^*):=\sup_{x\in X} (\langle x,x^*\rangle-f(x)).
$$

A set-valued operator $T:X\tos X^*$, or a subset $T\subset X\times X^*$,
is called
\ps
$\bullet$ {\it monotone} provided
$
\langle y-x,y^*-x^*\rangle \geq 0,\  \forall (x,x^*),(y,y^*)\in T,
$
\ps
$\bullet$ {\em maximal monotone} provided it is monotone and
maximal (under set inclusion) in the family of all monotone sets contained in $X\times X^*$,
\ps
$\bullet$ {\em representable} \cite{M-LS05}
provided there is a function $f\in\Gamma(X\times X^*)$
such that
$$
\left\lbrace\begin{array}{l}
f(x,x^*)\ge \langle x, x^* \rangle,\quad \forall \,(x,x^*) \in X \times X^*,\label{equ11}\smallskip\\
f(x,x^*)= \langle x, x^* \rangle \Leftrightarrow (x,x^*) \in T.\label{equ12}
\end{array}\right.
$$
We denote by
$$
\mathcal{F}(X\times X^*):=
\{ f\in \Gamma(X\times X^*) : f(x,x^*)\ge \langle x, x^* \rangle\ \forall \,(x,x^*)
\in X\times X^*\},
$$
the class of all \textit{representative} functions
and for $f\in\mathcal{F}(X\times X^*)$, we denote by
$$L(f):=\{(x,x^*) \in X\times X^* : f(x,x^*)= \langle x, x^* \rangle\}$$
the graph of the set-valued map \textit{represented} by $f$.
The class of representable operators can then be
synthetically expressed as:
\begin{equation*}
T \subset X\times X^*
\mbox{ is representable }\Longleftrightarrow 
\exists f\in \mathcal{F}(X\times X^*) \,:\, T=L(f).
\end{equation*}

Every representable operator is easily seen to be monotone;
the converse is not true.
For a nonempty $T:X\tos X^*$, we define $\varphi_T\in\Gamma(X\times X^*)$
by
$$
\varphi_T(x,x^*)
:= \sup_{(y,y^*)\in T}\langle y-x, x^*-y^*\rangle+\langle x, x^*\rangle.
$$
This function was introduced by Fitzpatrick \cite{Fit88} who showed that
it represents $T$ whenever $T$ is maximal monotone, in which case
it is actually the smallest representative function of $T$.
See, e.g., \cite{Fit88,M-LS05,GL12} and the references therein
for more details.
Moreover, we have:
\begin{fact}\cite[Theorem 3.1]{BS03}  \label{BS03}
Let $X$ be a reflexive Banach space and let $f\in \mathcal{F}(X\times X^*)$.
Then, $L(f)$ is maximal monotone
if and only if
$f^*(x^*,x)\ge \langle x, x^* \rangle$ for all $(x,x^*)\in X\times X^*$ .
\end{fact}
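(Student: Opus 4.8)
The plan is to prove the two implications separately; the forward one is a short estimate that works in any Banach space, while the reverse one — the substantial part — uses reflexivity essentially, through a Fenchel-duality argument. \textbf{``Only if''.} Suppose $T:=L(f)$ is maximal monotone and fix $(x,x^*)$. Restricting the supremum defining $f^*(x^*,x)$ to pairs $(y,y^*)\in T$, where $f(y,y^*)=\langle y,y^*\rangle$, and invoking the identity $\langle y,x^*\rangle+\langle x,y^*\rangle-\langle y,y^*\rangle=\langle x,x^*\rangle+\langle y-x,x^*-y^*\rangle$, I would obtain $f^*(x^*,x)\ge\langle x,x^*\rangle-\inf_{(y,y^*)\in T}\langle x-y,x^*-y^*\rangle=\varphi_T(x,x^*)$; since $T$ is maximal monotone, $\varphi_T$ represents $T$ (as recalled above), hence $\varphi_T\in\F(X\times X^*)$, i.e. $\varphi_T(x,x^*)\ge\langle x,x^*\rangle$, and the conclusion follows. (Alternatively, without $\varphi_T$: if $\inf_{(y,y^*)\in T}\langle x-y,x^*-y^*\rangle\le 0$ the displayed bound already suffices; if that infimum is positive, then $(x,x^*)$ is monotonically related to $T$, hence belongs to $T$ by maximality, which makes the infimum $\le 0$ — a contradiction.)

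\textbf{``If''.} Assume $f^*(x^*,x)\ge\langle x,x^*\rangle$ for all $(x,x^*)$; in particular $f$ is proper. Monotonicity of $L(f)$ is the usual one-line consequence of the convexity of $f$ together with $f\ge\langle\cdot,\cdot\rangle$. For maximality I would pick $(a,a^*)$ monotonically related to $L(f)$ and, by the standard translation — replacing $f$ by $(x,x^*)\mapsto f(x+a,x^*+a^*)-\langle x+a,x^*+a^*\rangle+\langle x,x^*\rangle$, which again belongs to $\F(X\times X^*)$, again satisfies the conjugate inequality, and whose associated operator contains the origin exactly when $(a,a^*)\in L(f)$ — reduce to showing: if $\langle y,y^*\rangle\ge 0$ for all $(y,y^*)\in L(f)$, then $(0,0)\in L(f)$. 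The device here is $q(x,x^*):=\tfrac12\|x\|^2+\tfrac12\|x^*\|^2$, which is convex and continuous, satisfies $q(x,x^*)\ge\langle x,x^*\rangle$ (Young's inequality), and — here reflexivity enters, identifying $(X\times X^*)^*$ with $X^*\times X$ — has conjugate $q^*(u^*,u)=\tfrac12\|u^*\|^2+\tfrac12\|u\|^2$.

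I would then analyze $\Phi:=f+q\in\Gamma(X\times X^*)$ from both sides. Pointwise, completing the square: $\Phi(x,x^*)\ge\langle x,x^*\rangle+\tfrac12\|x\|^2+\tfrac12\|x^*\|^2\ge\tfrac12\bigl(\|x\|-\|x^*\|\bigr)^2\ge 0$. Dually, since $q$ is continuous $\Phi^*$ is the exact infimal convolution of $f^*$ and $q^*$, so
\[
\inf\Phi=-\Phi^*(0,0)=-\inf_{(v^*,v)\in X^*\times X}\bigl(f^*(v^*,v)+\tfrac12\|v^*\|^2+\tfrac12\|v\|^2\bigr)\le 0,
\]
because $f^*(v^*,v)\ge\langle v,v^*\rangle$ by hypothesis and $\langle v,v^*\rangle+\tfrac12\|v\|^2+\tfrac12\|v^*\|^2\ge 0$; hence $\inf\Phi=0$. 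Moreover $\Phi^*$, being an infimum of continuous convex functions (infimal convolution with the continuous real-valued $q^*$), is finite, upper semicontinuous, hence continuous, so $\partial\Phi^*(0,0)\ne\emptyset$; by reflexivity any $(\bar x,\bar x^*)\in\partial\Phi^*(0,0)$ lies in $X\times X^*$ and satisfies $\Phi(\bar x,\bar x^*)=-\Phi^*(0,0)=0$, so the infimum is attained at $(\bar x,\bar x^*)$. Writing $0=\Phi(\bar x,\bar x^*)=\bigl(f(\bar x,\bar x^*)-\langle\bar x,\bar x^*\rangle\bigr)+\bigl(\langle\bar x,\bar x^*\rangle+\tfrac12\|\bar x\|^2+\tfrac12\|\bar x^*\|^2\bigr)$ as a sum of two nonnegative terms forces each to vanish: the first gives $(\bar x,\bar x^*)\in L(f)$, while the second (again by the square completion) gives $\|\bar x\|=\|\bar x^*\|$ and $\langle\bar x,\bar x^*\rangle=-\|\bar x\|^2$. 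Since $(\bar x,\bar x^*)\in L(f)$ and monotone-relatedness give $\langle\bar x,\bar x^*\rangle\ge 0$, we get $\|\bar x\|^2\le 0$, so $\bar x=\bar x^*=0$ and $(0,0)\in L(f)$. (En passant, $L(f)\ne\emptyset$, so $L(f)$ is maximal monotone.)

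I expect the main obstacle to be this reverse implication: recognizing the auxiliary function $q$ and seeing that the infimal-convolution formula converts the conjugate hypothesis into $\inf(f+q)\le 0$, which — against the trivial lower bound $\inf(f+q)\ge 0$ — pins that infimum to $0$; and then securing \emph{attainment} of it, which is exactly where reflexivity is indispensable (via $(X\times X^*)^*=X^*\times X$, so that $\partial\Phi^*(0,0)\subset X\times X^*$). The concluding equality chase, which at once produces a point of $L(f)$ and forces it to the origin via monotone-relatedness, is then routine.
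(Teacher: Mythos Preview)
The paper does not supply a proof of this statement: it is recorded as a \textit{Fact} with a citation to \cite{BS03} and is used as a black box throughout. So there is nothing in the paper to compare your argument against.

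That said, your proof is correct and is essentially the original Burachik--Svaiter argument: reduce by translation to showing $(0,0)\in L(f)$, introduce the quadratic $q(x,x^*)=\tfrac12\|x\|^2+\tfrac12\|x^*\|^2$, use Fenchel duality (with reflexivity giving $(X\times X^*)^*=X^*\times X$ and $q^*=q$ up to the swap) to pin $\inf(f+q)=0$, and then split the attained minimum into two nonnegative summands, one forcing membership in $L(f)$ and the other forcing the minimizer to the origin via the monotone-relatedness hypothesis. One small comment on the attainment step: the quickest justification that $\Phi^*$ is continuous at $0$ is that $\Phi^*$, being a conjugate, is lsc, and being finite everywhere (as you note, via the inf-convolution formula) it is therefore continuous on the whole Banach space $X^*\times X$; your ``usc $\Rightarrow$ continuous'' route also works, since a convex function bounded above on a neighborhood is continuous there.
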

In view of Fact \ref{BS03}, it is natural to also consider the class
\begin{multline*}
\mathcal{F^*}(X\times X^*):=
\{ f\in \mathcal{F}(X\times X^*) : 
\ f^*(x^*,x)\ge \langle x, x^* \rangle\ \forall \,(x,x^*)\in X\times X^*\}
\end{multline*}
of all representative functions of maximal monotone operators.
Fact \ref{BS03} can then be rewritten as:
in a reflexive Banach space $X$,
\begin{equation*}
T \subset X\times X^*
\mbox{ is maximal monotone }\Longleftrightarrow 
\exists f\in \mathcal{F^*}(X\times X^*) \,:\, T=L(f).
\end{equation*}

\smallbreak
Let $(T_n) \subset X \times X^*$ be a sequence of operators
from a Banach space $X$ to its dual $X^*$.
The \textit{lower limit} of $(T_n)$,
with respect to the strong topologies in $X$ and $X^*$, is the operator
$$
\liminf T_n:=
\{\lim_n \, (x_n,x^*_n): (x_n,x^*_n) \in T_n,\, \mbox{for all } n \in \N \}.
$$

A sequence $(f_n)\subset \Gamma(X)$ is said to {\em epi-converge\/}
to $f$, written $f=\e\lim_n f_n$, provided that at each point $x\in X$
both of the following conditions are satisfied:

(i) there exists $(x_n)$ convergent strongly to $x$ with
$f(x)=\displaystyle\lim_n f_n(x_n)$;

(ii) whenever $(x_n)$ is strongly convergent to $x$,
we have $ \displaystyle f(x)\le \liminf_n f_n(x_n)$.

A sequence $(f_n)\subset \Gamma(X)$
is said to {\em Mosco-converge\/} 
to $f$, written $f=\M\lim_n f_n$, provided that at each point $x\in X$
both of the following conditions are satisfied:

(i) there exists $(x_n)$ convergent strongly to $x$ with
$f(x)=\displaystyle\lim_n f_n(x_n)$;

(ii) whenever $(x_n)$ is weakly convergent to $x$,
we have $ \displaystyle f(x)\le \liminf_n f_n(x_n)$.

Of course, in a finite dimensional space $X$, epi and Mosco convergences are the same.

\begin{fact}\cite[Theorem 1]{Mos71}\label{Mos71}
Let $X$ be a reflexive Banach space, $(f_n)\subset \Gamma(X)$ and $f\in \Gamma(X)$.
Then, $f=\M\lim_n f_n$ if and only if $f^*=\M\lim_n f^*_n$.
\end{fact}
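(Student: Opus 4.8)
The plan is to exploit the symmetry of the assertion under conjugation. Since $X$ is reflexive, $f^{**}=f$ and $f_n^{**}=f_n$ for all $n$, so it suffices to establish just one implication: if $f=\M\lim_n f_n$ then $f^*=\M\lim_n f_n^*$; applying that implication to the sequence $(f_n^*)$ and the function $f^*$ then yields the converse. One may assume $f$ is proper, since Mosco convergence to a proper limit forces the $f_n$ to be proper from some index on. Thus the task reduces to verifying, under $f=\M\lim_n f_n$, the two defining conditions of $f^*=\M\lim_n f_n^*$: the weak lower-limit inequality, and the existence of strongly convergent recovery sequences in $X^*$.

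I would dispose of the weak lower-limit inequality first, as it is elementary. Given $x^*\in X^*$ and $(x_n^*)$ converging weakly to $x^*$, I fix any $x\in X$ with $f(x)<+\infty$ and use condition (i) of $f=\M\lim_n f_n$ to get $x_n\to x$ strongly with $f_n(x_n)\to f(x)$. Then $f_n^*(x_n^*)\ge\lan x_n,x_n^*\ran-f_n(x_n)$, and since a strongly convergent sequence paired with a weakly convergent one converges, $\lan x_n,x_n^*\ran\to\lan x,x^*\ran$; hence $\liminf_n f_n^*(x_n^*)\ge\lan x,x^*\ran-f(x)$, and taking the supremum over such $x$ gives $\liminf_n f_n^*(x_n^*)\ge f^*(x^*)$.

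The construction of strong recovery sequences is where the work lies. For $x^*\notin\operatorname{dom}f^*$ the constant sequence suffices, so I would concentrate on $x^*\in\operatorname{dom}f^*$; and since, by a Moreau--Yosida regularization of $f^*$ (equivalently, the Br{\o}ndsted--Rockafellar theorem), $\operatorname{dom}\partial f^*$ is dense in $\operatorname{dom}f^*$ along a sequence on which the values of $f^*$ converge, a standard diagonalization reduces the problem to $x^*\in\operatorname{dom}\partial f^*$. So fix $\bar x\in\partial f^*(x^*)$, equivalently $x^*\in\partial f(\bar x)$. Invoking Troyanski's renorming theorem (the conclusion being norm-independent), I may assume $X$ and $X^*$ are both locally uniformly rotund; then the norm of $X$ is Fr\'echet differentiable, the duality map $J:=\partial(\tfrac12\|\cdot\|^2):X\to X^*$ is single-valued and norm-to-norm continuous, and $X$ has the Kadec--Klee property. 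Setting $g(x):=-\lan x,x^*\ran+\tfrac12\|x-\bar x\|^2$ and $y_n:=\arg\min_X(f_n+g)$ (unique by strict convexity and coercivity), I would observe that adding the strongly continuous convex function $g$ preserves Mosco convergence, so $f_n+g\to f+g$ in the Mosco sense, while $\bar x=\arg\min_X(f+g)$ because $\bar x$ minimizes each of $f-\lan\cdot,x^*\ran$ and $\tfrac12\|\cdot-\bar x\|^2$. The crux is to show $y_n\to\bar x$ strongly with $(f_n+g)(y_n)\to(f+g)(\bar x)$. Granting this, first-order optimality gives $x_n^*:=x^*-J(y_n-\bar x)\in\partial f_n(y_n)$, whence $x_n^*\to x^*-J(0)=x^*$ strongly; also $f_n(y_n)\to f(\bar x)$; and the Fenchel equality yields $f_n^*(x_n^*)=\lan y_n,x_n^*\ran-f_n(y_n)\to\lan\bar x,x^*\ran-f(\bar x)=f^*(x^*)$, which is exactly the recovery condition.

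I expect the main obstacle to be this convergence $y_n\to\bar x$ of the regularized minimizers. The $\limsup$ direction is harmless: testing $f_n+g$ at a recovery sequence for $\bar x$ gives $\limsup_n(f_n+g)(y_n)\le(f+g)(\bar x)$. For the $\liminf$ direction one needs $(y_n)$ bounded, which I would get from the standard fact that a Mosco-convergent sequence with proper limit is eventually minorized by one common continuous affine functional, so that $f_n+g$ is eventually equi-coercive. A weak cluster point $y$ of $(y_n)$ then satisfies $(f+g)(y)\le\liminf_n(f_n+g)(y_n)\le(f+g)(\bar x)$ by condition (ii), forcing $y=\bar x$, and along the way the values converge. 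Finally, combining $y_n\rightharpoonup\bar x$ with the resulting convergence $\|y_n-\bar x\|\to 0$, the Kadec--Klee property upgrades this to $y_n\to\bar x$ in norm. It is precisely here --- weak sequential compactness of bounded sets together with the Troyanski renorming --- that reflexivity is indispensable, and it is also what makes the dual recovery sequences \emph{strongly} convergent, as Mosco (rather than merely epi-) convergence demands.
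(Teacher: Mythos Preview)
The paper does not prove this statement: it is recorded as a \emph{Fact} with a citation to Mosco \cite{Mos71} and is used as a black box elsewhere in the paper. There is therefore no ``paper's own proof'' to compare against; your proposal supplies a full argument where the paper simply quotes the literature.

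That said, your strategy is correct and is essentially the classical one (close to Mosco's original and to the treatment in Attouch's monograph): reduce by biconjugation symmetry to a single implication; obtain the weak--$\liminf$ inequality for the conjugates by pairing strong recovery sequences in $X$ against weakly convergent sequences in $X^*$; and construct strong recovery sequences in $X^*$ by a Moreau--Yosida--type regularization after a Troyanski renorming, together with the Br{\o}ndsted--Rockafellar density and a diagonalization. Two minor remarks. First, once your value-convergence argument gives $\limsup_n g(y_n)\le g(\bar x)$ and you subtract the linear part (which converges because $y_n\rightharpoonup\bar x$), you obtain $\limsup_n\tfrac12\|y_n-\bar x\|^2\le 0$ directly; this \emph{is} norm convergence $y_n\to\bar x$, so the appeal to the Kadec--Klee property is superfluous. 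Second, the assertion that a Mosco-convergent sequence with proper limit eventually admits a common continuous affine minorant is true and standard, but it is the one step that merits an explicit reference or a line of proof, since it is precisely where properness of $f$ is used to get boundedness of the minimizers $(y_n)$.
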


Let $X$ be a reflexive Banach space supplied
with a strictly convex norm whose dual norm on $X^*$ is strictly convex.
Under these assumptions, the {\it duality mapping} $J:X\tos X^*$ given by
$$
Jx:= \bigl\{x^*\in X^*: \langle x^*, x \rangle = \|x\|^2 = \|x^*\|^2\bigr\}
$$
is single-valued, bijective and maximal monotone, and 
a monotone operator $T:X\tos X^*$ is maximal monotone if and only if
the operator $J+T$ is onto (see, e.g., Rockafellar \cite{Roc70}).
Moreover, in that case, the operator
$(J+T)^{-1}$ is single-valued on $X^*$.
According to this result, for any maximal monotone operator $T_n: X \tos X^*$
and any $(x,x^*) \in X \times X^*$,
there is a unique solution $x_n=J_{T_n}(x,x^*)$ of the inclusion
\begin{equation} \label{reussi1}
 x^* \in J(x_n-x) + T_n(x_n).
\end{equation}
\begin{fact}\cite[Lemma 3.1 and Proposition 3.2]{GL12}\label{caracliminf}
Under the above assumptions on $X$,
let $(T_n)\subset X \times X^*$ be a sequence of maximal monotone operators.
\smallbreak
{\rm (1)} If $\,\liminf T_n$ is not empty, then for every $(x,x^*) \in X \times X^*$
the sequence $(J_{T_n}(x,x^*))_n$ of solutions of the inclusion (\ref{reussi1})
is bounded.
\smallbreak
{\rm (2)} $\liminf T_n=\{(x,x^*)\in X \times X^* :  J_{T_n}(x,x^*)\to x \}$.
\end{fact}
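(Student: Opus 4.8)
The plan is to derive both statements from a single computation: insert the defining inclusion for $J_{T_n}(x,x^*)$ into the monotonicity inequality for $T_n$, test it against a point of $\liminf T_n$, and exploit the identities $\langle z, Jz\rangle=\|z\|^2$ and $\|Jz\|=\|z\|$. Throughout, fix once and for all a point $(a,a^*)\in\liminf T_n$ together with a sequence $(a_n,a_n^*)\in T_n$ with $(a_n,a_n^*)\to(a,a^*)$ strongly; in particular $(\|x-a_n\|)_n$ and $(\|x^*-a_n^*\|)_n$ are bounded for any fixed $(x,x^*)$.

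For (1): given $(x,x^*)$, put $x_n:=J_{T_n}(x,x^*)$, so that $x^*-J(x_n-x)\in T_n(x_n)$. Monotonicity of $T_n$ applied to the pairs $(x_n,\,x^*-J(x_n-x))$ and $(a_n,a_n^*)$, followed by the splitting $x_n-a_n=(x_n-x)+(x-a_n)$, gives
$$\langle x_n-a_n,\,x^*-a_n^*\rangle\ \ge\ \langle x_n-a_n,\,J(x_n-x)\rangle\ =\ \|x_n-x\|^2+\langle x-a_n,\,J(x_n-x)\rangle.$$
Bounding the left-hand side and the last term by Cauchy--Schwarz and using $\|J(x_n-x)\|=\|x_n-x\|$, and writing $r_n:=\|x_n-x\|$, this reduces to $r_n^2\le \alpha_n r_n+\beta_n$, where $\alpha_n:=\|x^*-a_n^*\|+\|x-a_n\|$ and $\beta_n:=\|x-a_n\|\,\|x^*-a_n^*\|$ are bounded. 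Solving the quadratic inequality shows $(r_n)_n$ is bounded, i.e. $(J_{T_n}(x,x^*))_n$ is bounded.

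For (2): If $J_{T_n}(x,x^*)\to x$, set $y_n:=J_{T_n}(x,x^*)$ and $y_n^*:=x^*-J(y_n-x)$; then $(y_n,y_n^*)\in T_n$, and since $\|J(y_n-x)\|=\|y_n-x\|\to 0$ we get $(y_n,y_n^*)\to(x,x^*)$ strongly, so $(x,x^*)\in\liminf T_n$. Conversely, let $(x,x^*)\in\liminf T_n$, witnessed by $(x_n,x_n^*)\in T_n$ with $(x_n,x_n^*)\to(x,x^*)$, and set $y_n:=J_{T_n}(x,x^*)$. Running the computation of part (1) verbatim with $(x_n,x_n^*)$ in place of $(a_n,a_n^*)$ yields
$$\|y_n-x\|^2\ \le\ \|y_n-x\|\big(\|x^*-x_n^*\|+\|x-x_n\|\big)+\|x-x_n\|\,\|x^*-x_n^*\|.$$
Since $\liminf T_n\neq\emptyset$, part (1) gives that $(y_n)_n$ is bounded, hence $\|y_n-x\|$ is bounded; as the right-hand side tends to $0$, we conclude $y_n\to x$.

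The main obstacle is precisely the boundedness asserted in (1): this is where the hypothesis $\liminf T_n\neq\emptyset$ enters, since without a fixed element of $T_n$ against which to test monotonicity there is no way to control $\|J_{T_n}(x,x^*)-x\|$, and every later step of (2) relies on it. The remaining points are routine, the only thing to watch being that the strong continuity of $J$ at the origin (indeed the isometry $\|Jz\|=\|z\|$) is what upgrades convergence of $y_n$ in $X$ to convergence of the pair $(y_n,y_n^*)$ in $X\times X^*$.
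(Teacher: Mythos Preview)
The paper does not actually prove this statement: it is stated as a \emph{Fact} and attributed to \cite[Lemma 3.1 and Proposition 3.2]{GL12}, so there is no in-paper argument to compare against. Your proof is correct and is precisely the natural argument one expects in the cited reference: insert the defining inclusion $x^*-J(x_n-x)\in T_n(x_n)$ into the monotonicity inequality against a witnessing sequence $(a_n,a_n^*)\in T_n$, use $\langle z,Jz\rangle=\|z\|^2$ and $\|Jz\|=\|z\|$, and bound via a quadratic inequality in $r_n=\|x_n-x\|$.

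Two minor remarks. First, in the converse direction of (2) you do not actually need to invoke (1) for boundedness: the quadratic inequality $r_n^2\le \alpha_n r_n+\beta_n$ already gives $r_n\le \tfrac{1}{2}\bigl(\alpha_n+\sqrt{\alpha_n^2+4\beta_n}\bigr)\to 0$ directly, since here $\alpha_n,\beta_n\to 0$. Your detour through boundedness is of course also valid. Second, the case $\liminf T_n=\emptyset$ in (2) is handled implicitly (the nontrivial inclusion is then vacuous), which is fine.
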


Using the above Fact \ref{caracliminf},
we showed in \cite[Theorem 3.3]{GL12} that, in a reflexive Banach space, the 
lower limit $T=\liminf T_n$ of a sequence of maximal monotone operators $(T_n)$
is always representable. The aim of the present paper is to bring
precisions to this result. In Theorem \ref{reussi12017} below,
we establish:

(1) the continuity of the operation $f \mapsto L (f)$
with respect to the epi-convergence of the representative functions
in $\mathcal{F^*}(X\times X^*)$,

(2) the stability of the class of maximal monotone operators with respect
to the Mosco-convergence of their representative functions.

As an application of this theorem, in the last section we establish analogous results
concerning the monotone operators defined as diagonals of subdifferentials of convex
functions. 

\section{Convergence of representative functions}

We first observe that the class $\mathcal{F}(X\times X^*)$ is stable
under epi-convergence and the class $\mathcal{F^*}(X\times X^*)$ is stable
under Mosco-convergence:

\begin{prop}\label{convergence}
Let $X$ be a reflexive Banach space.
\smallbreak
{\rm (1)} Let $(f_n)\subset \mathcal{F}(X\times X^*)$.
If $(f_n)$ epi-converges to $f$,
then $f\in \mathcal{F}(X\times X^*)$.
\smallbreak
{\rm (2)} Let $(f_n)\subset \mathcal{F^*}(X\times X^*)$.
If $(f_n)$ Mosco-converges to $f$,
then $f \in \mathcal{F^*}(X\times X^*)$.
\end{prop}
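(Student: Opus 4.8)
The plan is to verify the two defining inequalities directly from the definitions of epi- and Mosco-convergence, together with Fact \ref{Mos71} for part (2). For part (1), fix $(x,x^*)\in X\times X^*$. Since $f=\e\lim_n f_n$, condition (ii) in the definition of epi-convergence gives a sequence $(x_n,x^*_n)\to(x,x^*)$ (strongly in $X\times X^*$) such that $f(x,x^*)=\lim_n f_n(x_n,x^*_n)$. Because each $f_n\in\mathcal{F}(X\times X^*)$, we have $f_n(x_n,x^*_n)\ge\langle x_n,x^*_n\rangle$ for every $n$, and since the duality pairing $\langle\cdot,\cdot\rangle$ is continuous for the norm topology on $X\times X^*$, passing to the limit yields $f(x,x^*)\ge\langle x,x^*\rangle$. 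As $(x,x^*)$ was arbitrary and $f\in\Gamma(X\times X^*)$ (epi-limits of lsc convex functions are lsc and convex), we conclude $f\in\mathcal{F}(X\times X^*)$.

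For part (2), suppose $(f_n)\subset\mathcal{F^*}(X\times X^*)$ Mosco-converges to $f$. Since Mosco-convergence implies epi-convergence, part (1) already gives $f\in\mathcal{F}(X\times X^*)$; it remains to check the conjugate inequality $f^*(x^*,x)\ge\langle x,x^*\rangle$ for all $(x,x^*)$. Here I would invoke Fact \ref{Mos71}: because $X$ is reflexive and $f=\M\lim_n f_n$, we have $f^*=\M\lim_n f^*_n$ on the dual space $X^*\times X$ (recall $X^*\times X$ is a subspace of $(X\times X^*)^*$, and reflexivity makes $X\times X^*$ reflexive, so the fact applies). Now fix $(x,x^*)$ and apply condition (ii) of the Mosco-convergence $f^*=\M\lim_n f^*_n$ at the point $(x^*,x)$: there is a sequence $(x^*_n,x_n)\to(x^*,x)$ strongly (hence also weakly) with $f^*(x^*,x)=\lim_n f^*_n(x^*_n,x_n)$. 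Since each $f_n\in\mathcal{F^*}(X\times X^*)$ means precisely $f^*_n(x^*_n,x_n)\ge\langle x_n,x^*_n\rangle$, and the pairing is norm-continuous, letting $n\to\infty$ gives $f^*(x^*,x)\ge\langle x,x^*\rangle$. Hence $f\in\mathcal{F^*}(X\times X^*)$.

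The argument is essentially a limiting-inequality manipulation, so I do not anticipate a serious obstacle; the only point requiring care is the bookkeeping of which space and which duality the conjugation lives in. Specifically, one must be comfortable identifying $X^*\times X$ as a closed subspace of the dual of $X\times X^*$ and checking that Fact \ref{Mos71}, stated for a reflexive space and its conjugates, applies verbatim with $X\times X^*$ in the role of the reflexive space — this is where reflexivity of $X$ is used. Alternatively, if one prefers to avoid Fact \ref{Mos71}, part (2) can also be obtained by computing $f^*$ directly via the Mosco-convergence of $(f_n)$ and the Young–Fenchel inequality, but routing through Fact \ref{Mos71} keeps the proof short and is the natural choice given that the fact has been recorded precisely for this purpose.
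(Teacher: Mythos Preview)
Your proof is correct and follows essentially the same approach as the paper's: use the recovery sequence from epi-convergence to pass the inequality $f_n\ge\langle\cdot,\cdot\rangle$ to the limit, and for (2) invoke Fact~\ref{Mos71} so that the same argument applies to the conjugates. One minor slip: in both parts you write ``condition (ii)'' when you are actually invoking condition (i) --- it is (i) that furnishes the recovery sequence $(x_n,x^*_n)\to(x,x^*)$ with $f(x,x^*)=\lim_n f_n(x_n,x^*_n)$, whereas (ii) is the $\liminf$ inequality.
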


\begin{proof}
(1)
It follows directly from the definition of epi-convergence that
$f$ belongs to $\Gamma (X\times X^*)$.
Let $(x,x^*) \in X\times X^*$. By (i) in the definition of epi-convergence,
there exists a sequence $(x_n,x_n^*)$ converging to $(x,x^*)$ such that
$\lim_nf_n(x_n,x_n^*) = f(x,x^*)$.
Since $f_n\in \F (X\times X^*)$ for every $n\in \N$, one has
$\langle x_n, x_n^*\rangle \leq f_n(x_n,x_n^*)$
for every $n\in \N$.
Hence, $\langle x, x^*\rangle \leq f(x,x^*)$.
This shows that $f\in\F(X\times X^*)$.
\smallbreak
(2)
By Fact \ref{Mos71}, we have both
$f=\e\lim f_n$ with $(f_n)\subset \mathcal{F}(X\times X^*)$
and $f^*=\e\lim f^*_n$ with
$(f^*_n)\subset \mathcal{F}(X^*\times X)$.
It follows from (1) that $f(x,x^*)\ge \langle x, x^*\rangle$
and $f^*(x^*,x)\ge \langle x, x^*\rangle$
for every $(x,x^*)\in X\times X^*$.
Hence,
$f\in \mathcal{F^*}(X\times X^*)$.
\end{proof}

\begin{theo} \label{reussi12017}
Let $X$ be a reflexive Banach space and let $(f_n) \subset \F (X\times X^*)$
with $T_n:=L(f_n)$ maximal monotone.
\smallbreak
{\rm (1)} If $(f_n)$ epi-converges to $f$, then $L(f)=\liminf T_n$.
\smallbreak
{\rm (2)} If $(f_n)$ Mosco-converges to $f$, then $L(f)=\liminf T_n$
and $\liminf T_n$ is maximal monotone.

\end{theo}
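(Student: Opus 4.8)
The plan is to prove (1) by establishing the two inclusions $\liminf T_n\subseteq L(f)$ and $L(f)\subseteq\liminf T_n$, and then to deduce (2) from (1) together with Proposition~\ref{convergence}(2) and Fact~\ref{BS03}. Since maximal monotonicity, representability, the map $L(\cdot)$, and epi- and Mosco-convergence are all insensitive to replacing the norm of $X$ by an equivalent one, while $\liminf T_n$ is taken with respect to the strong topologies, I may and do assume that the norm of $X$ and its dual norm are strictly convex; then for every maximal monotone $T\subset X\times X^*$ and every $(a,a^*)$ the point $z=J_T(a,a^*)$ solving $a^*\in J(z-a)+T(z)$ is well defined (Rockafellar), and $\|Jv\|=\|v\|$ for all $v$.

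For the first inclusion, let $(x,x^*)\in\liminf T_n$ and pick $(x_n,x_n^*)\in T_n$ converging strongly to $(x,x^*)$. Since $T_n=L(f_n)$ we have $f_n(x_n,x_n^*)=\langle x_n,x_n^*\rangle\to\langle x,x^*\rangle$, so condition (ii) in the definition of epi-convergence yields $f(x,x^*)\le\langle x,x^*\rangle$; as $f\in\F(X\times X^*)$ by Proposition~\ref{convergence}(1), in fact $f(x,x^*)=\langle x,x^*\rangle$, i.e.\ $(x,x^*)\in L(f)$.

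The reverse inclusion is the heart of the matter, and it relies on the elementary estimate $\|J_T(a,a^*)-a\|^2\le g(a,a^*)-\langle a,a^*\rangle$, valid for every maximal monotone $T$, every representative $g\in\F(X\times X^*)$ of $T$, and every $(a,a^*)\in X\times X^*$. It is obtained by putting $z=J_T(a,a^*)$ and $b^*=a^*-J(z-a)$, so that $(z,b^*)\in T$ and hence $g(z,b^*)=\langle z,b^*\rangle$, then combining $g\ge\langle\cdot,\cdot\rangle$ with the convexity inequality $g((1-t)(a,a^*)+t(z,b^*))\le(1-t)g(a,a^*)+tg(z,b^*)$ for $t\in(0,1)$, dividing the result by $1-t$, letting $t\uparrow1$, and using $\langle z-a,J(z-a)\rangle=\|z-a\|^2$. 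Granting this, let $(x,x^*)\in L(f)$, so $f(x,x^*)=\langle x,x^*\rangle$. By condition (i) in the definition of epi-convergence there is a sequence $(u_n,u_n^*)\to(x,x^*)$ strongly with $f_n(u_n,u_n^*)\to\langle x,x^*\rangle$, hence $f_n(u_n,u_n^*)-\langle u_n,u_n^*\rangle\to 0$. Put $z_n=J_{T_n}(u_n,u_n^*)$ and $w_n^*=u_n^*-J(z_n-u_n)\in T_n(z_n)$. The estimate gives $\|z_n-u_n\|^2\le f_n(u_n,u_n^*)-\langle u_n,u_n^*\rangle\to 0$, so $z_n\to x$, and $\|w_n^*-u_n^*\|=\|J(z_n-u_n)\|=\|z_n-u_n\|\to 0$, so $w_n^*\to x^*$. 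Thus $(z_n,w_n^*)\in T_n$ converges strongly to $(x,x^*)$, that is $(x,x^*)\in\liminf T_n$. This proves (1).

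For (2), note that Mosco-convergence implies epi-convergence (condition (ii) along weakly convergent sequences is stronger than along strongly convergent ones, and condition (i) is the same), so (1) already gives $L(f)=\liminf T_n$. Moreover, each $f_n$ lies in $\mathcal{F^*}(X\times X^*)$ by Fact~\ref{BS03}, since $T_n=L(f_n)$ is maximal monotone; Proposition~\ref{convergence}(2) then yields $f\in\mathcal{F^*}(X\times X^*)$, and applying Fact~\ref{BS03} to $f$ shows that $L(f)$, hence $\liminf T_n$, is maximal monotone. I expect the only real obstacle to be the resolvent estimate above, together with the routine but necessary check that the reduction to a strictly convex renorming is harmless; the remainder is direct bookkeeping with the quoted facts and Proposition~\ref{convergence}.
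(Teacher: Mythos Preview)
Your proof is correct and follows the same overall architecture as the paper's (the two inclusions for (1), then (2) via Proposition~\ref{convergence}(2) and Fact~\ref{BS03}), but the hard inclusion $L(f)\subset\liminf T_n$ is handled by a genuinely different mechanism. The paper fixes $(x,x^*)\in L(f)$, applies the resolvent at that \emph{fixed} point to get $x_n=J_{T_n}(x,x^*)$, invokes Fact~\ref{caracliminf}(1) for boundedness of $(x_n)$, uses the minimality $f_n\ge\varphi_{T_n}$ of the Fitzpatrick function together with a recovery sequence $(y_n,y_n^*)$, and concludes via the characterization in Fact~\ref{caracliminf}(2). You instead apply the resolvent at the \emph{moving} recovery points $(u_n,u_n^*)$ and rely on the clean pointwise estimate $\|J_T(a,a^*)-a\|^2\le g(a,a^*)-\langle a,a^*\rangle$, obtained purely from the convexity of $g$ and the fact that $g$ attains the duality pairing on $T$; this yields directly a sequence $(z_n,w_n^*)\in T_n$ converging to $(x,x^*)$. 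Your route is more self-contained: it avoids both parts of Fact~\ref{caracliminf} (in particular the a priori boundedness of resolvents, which in the paper tacitly requires $\liminf T_n\ne\emptyset$) and does not appeal to the minimality of $\varphi_{T_n}$ among representatives. The paper's route, on the other hand, makes the role of the Fitzpatrick function explicit and ties the argument to the resolvent characterization of $\liminf T_n$ developed in \cite{GL12}.
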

\begin{proof}
According to Asplund \cite{Asp67}, there exists an equivalent strictly convex norm
on $X$ whose dual norm on $X^*$ is strictly convex.
Since the statements to be proved do not depend on the norms in $X$ and $X^*$,
without loss of generality we can assume that the given norm on $X$
is strictly convex as well as its dual. 
This allows us to use Fact \ref{caracliminf}.
\smallbreak
(1)
By Proposition \ref{convergence}, $f\in\F(X\times X^*)$.
Let $T:=\liminf T_n$.
We claim that $T\subset L(f)$.
Indeed, let  $(x,x^*) \in T$. By definition of $\liminf_n T_n$,
for each $n\in\N$ there exists $(x_n,x_n^*) \in T_n$
such that $(x_n,x_n^*)\to (x,x^*)$.
Since $T_n=L(f_n)$, for every $n\in \N$ one has
$f_n(x_n,x_n^*) = \langle x_n, x_n^*\rangle$.
But it follows from the definition of $\e\lim_n f_n$ that
$\liminf_nf_n(x_n,x_n^*) \ge f(x,x^*)$, hence
\begin{equation*}
\langle x, x^*\rangle=
\liminf_n\langle x_n, x_n^*\rangle=\liminf_nf_n(x_n,x_n^*) \ge f(x,x^*).
\end{equation*}
Since $f\in\F(X\times X^*)$ we also have $\langle x, x^*\rangle \leq f(x,x^*)$.
So $f(x,x^*)= (x, x^*)$, proving that $(x,x^*) \in L(f)$.

\smallbreak
It remains to show that $L(f)\subset T$.
Let $(x,x^*) \in L(f)$. For each $n\in\N$ let $x_n=J_{T_n}(x,x^*)$ be the solution
of the inclusion (\ref{reussi1}).
According to Fact \ref{caracliminf}\,(2), to prove that $(x,x^*) \in T$
it suffices to show that $x_n \to x$.
Let $((y_n,y_n^*))_n$ be a sequence in $X\times X^*$ such that
$\lim_nf_n(y_n,y_n^*) = f(x,x^*)$.
Since $f_n$ is a representative function of $T_n$, for every $n\in \N$ one has
$$
f_n(y_n,y_n^*) \geq \varphi_{T_n}(y_n,y_n^*).
$$
Hence
\begin{align}\label{one}
\langle x, x^*\rangle=f(x,x^*)=\lim f_n(y_n,y_n^*)
\geq \limsup \varphi_{T_n}(y_n,y_n^*).
\end{align}
Since $x^* -J(x_n-x)$ belongs to $T_n(x_n)$,
 we derive from the definition of $\varphi_{T_n}$ that for every $n\in \N$,
\begin{align*}
\varphi_{T_n}(y_n,y_n^*)  &\geq \langle y_n-x_n ,x^* -J(x_n-x) - y_n^*\rangle + \langle y_n, y_n^*\rangle\\
&=\langle y_n-x_n,x^* - y_n^* \rangle +
\|x_n-x\|^2+ \langle x -y_n,J(x_n-x)\rangle + \langle y_n, y_n^*\rangle.
\end{align*}
By Fact \ref{caracliminf}\,(1), the sequences $(x_n)$ and $(J(x_n-x))$
are bounded, so passing to the limit in the above inequality we get
$$
\limsup \varphi_{T_n}(y_n,y_n^*)  \ge
\limsup \|x_n-x\|^2+ \langle x, x^*\rangle.
$$
Combining with \eqref{one}, we obtain
$$
\langle x, x^*\rangle \ge
\limsup \|x_n-x\|^2+ \langle x, x^*\rangle.
$$
This shows that $x_n \to x$.
The proof of (1) is complete.
\smallbreak
(2) 
We know from (1) that $\liminf T_n=L(f)$.
Since  $T_n=L(f_n)$ is maximal monotone, the sequence $(f_n)$
lies in  $\mathcal{F^*}(X\times X^*)$
by Fact \ref{BS03}. Hence $f\in \mathcal{F^*}(X\times X^*)$
by Proposition \ref{convergence}.
We conclude from Fact \ref{BS03} again
that $\liminf T_n=L(f)$ is maximal monotone.
\end{proof}

It may happen that the lower limit $\liminf T_n$ is not maximal monotone,
in which case according to Theorem \ref{reussi12017}\,(2) there is no
Mosco-converging representative functions of $T_n$. Here is an example:

\smallbreak
\noindent{\it Example} (taken from \cite{GL12}).
In $\R\times\R$, let $T_n=\{0\}\times \R$ for even $n$, $T_n=\R \times \{0\}$ for odd $n$.
Then, each $T_n$ is a maximal monotone operator with convex graph,
the lower limit $\liminf T_n = \{(0,0)\}$
is certainly representable but not maximal monotone; so, there is no
representative functions of $T_n$ which is epi-converging (=Mosco-converging).

\smallbreak
In fact, for a sequence of maximal monotone operators with convex graph
like in the above example,
the maximal monotonicity of the lower limit can happen only if
the sequence of operators can be represented by an epi-converging
sequence of representative functions. More precisely:

\begin{theo}\label{CN-maxmono}
Let $X$ be a Banach space space and let
$(T_n)\subset X\times X^*$ be a sequence of maximal monotone operators with convex graph.
If $T:=\liminf T_n$ is maximal monotone, then
the sequence $(\varphi_{T_n} + \delta_{T_n})$ of representative functions of $(T_n)$
epi-converges to $\varphi_{T} + \delta_{T}$, a representative function of $T$.
\end{theo}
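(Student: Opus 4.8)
The plan is to reduce everything to a statement about convergence of the graphs. First I would record that for \emph{any} monotone set $S\subset X\times X^*$ one has $\varphi_S(x,x^*)=\langle x,x^*\rangle$ for all $(x,x^*)\in S$: in the supremum defining $\varphi_S$ the choice $(y,y^*)=(x,x^*)$ gives the term $0$, and monotonicity makes every other term $\le 0$; meanwhile, off $S$ the summand $\delta_S$ sends both $\varphi_S+\delta_S$ and $\langle\cdot,\cdot\rangle+\delta_S$ to $+\infty$. Hence $\varphi_S+\delta_S=\langle\cdot,\cdot\rangle+\delta_S$. Each $T_n$, being maximal monotone, has strongly closed graph and, by hypothesis, convex graph, so $\delta_{T_n}\in\Gamma(X\times X^*)$; therefore $\varphi_{T_n}+\delta_{T_n}=\langle\cdot,\cdot\rangle+\delta_{T_n}$ lies in $\F(X\times X^*)$ and satisfies $L(\varphi_{T_n}+\delta_{T_n})=T_n$. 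The same computation applies to $T=\liminf T_n$, which is strongly closed (being maximal monotone by hypothesis) and convex (a lower limit of convex sets is convex), so $\varphi_T+\delta_T=\langle\cdot,\cdot\rangle+\delta_T$ is a representative function of $T$. Since the duality pairing $(x,x^*)\mapsto\langle x,x^*\rangle$ is norm-continuous on $X\times X^*$ and adding a continuous finite function does not affect epi-convergence, the theorem reduces to proving $\delta_{T_n}\toepi\delta_T$.

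Next I would note that $\delta_{T_n}\toepi\delta_T$ is exactly Painlev\'e--Kuratowski convergence of the graphs in the strong topology, i.e. $\limsup T_n\subset T\subset\liminf T_n$, where $\limsup T_n$ is the set of strong limits of subsequences $(x_{n_k},x_{n_k}^*)\in T_{n_k}$. The inclusion $T\subset\liminf T_n$ is just the definition of $T$, so the whole point is to establish $\limsup T_n\subset T$, and this is the step where the maximal monotonicity of $T$ is consumed. Given $(x,x^*)=\lim_k(x_{n_k},x_{n_k}^*)$ with $(x_{n_k},x_{n_k}^*)\in T_{n_k}$, and an arbitrary $(y,y^*)\in T=\liminf T_n$ written as the limit of some $(y_n,y_n^*)\in T_n$, the monotonicity of $T_{n_k}$ gives $\langle x_{n_k}-y_{n_k},x_{n_k}^*-y_{n_k}^*\rangle\ge 0$; passing to the limit (strong convergence suffices for the pairing to converge) yields $\langle x-y,x^*-y^*\rangle\ge 0$. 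Thus $T\cup\{(x,x^*)\}$ is monotone, and $T$ being maximal monotone, $(x,x^*)\in T$.

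Finally, from $\limsup T_n\subset T\subset\liminf T_n$ the two clauses of epi-convergence of $\delta_{T_n}$ to $\delta_T$ are routine. For clause (i): at $(x,x^*)\in T=\liminf T_n$ pick a recovery sequence drawn from the $T_n$, along which $\delta_{T_n}$ vanishes; at $(x,x^*)\notin T$ take the constant sequence, which eventually leaves $T_n$ — otherwise $(x,x^*)\in\limsup T_n\subset T$ — so that $\delta_{T_n}(x,x^*)\to+\infty=\delta_T(x,x^*)$. For clause (ii): at $(x,x^*)\in T$ the inequality is trivial by nonnegativity of $\delta_{T_n}$; at $(x,x^*)\notin T$ any $(x_n,x_n^*)\to(x,x^*)$ has $\liminf_n\delta_{T_n}(x_n,x_n^*)=+\infty$, since along any subsequence on which this stayed finite one would have $(x_{n_k},x_{n_k}^*)\in T_{n_k}$ and hence $(x,x^*)\in\limsup T_n\subset T$. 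Re-adding the continuous duality pairing then gives $\varphi_{T_n}+\delta_{T_n}\toepi\varphi_T+\delta_T$, and $\varphi_T+\delta_T$ was already seen to represent $T$.

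The only genuinely substantive step is $\limsup T_n\subset T$; it is short, but it is exactly what fails in the example preceding the theorem, so the hypothesis that $T$ be maximal monotone is essential and cannot be dropped. A secondary point needing care is the verification that $\varphi_{T_n}+\delta_{T_n}\in\Gamma(X\times X^*)$, which is where the convexity of the graphs is used: the bilinear pairing $\langle\cdot,\cdot\rangle$ is itself not convex, but its restriction to a convex monotone set, extended by $+\infty$, is.
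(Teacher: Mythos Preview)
Your proof is correct and takes a somewhat different, more structural route than the paper. The paper verifies the two epi-convergence conditions for $f_n=\varphi_{T_n}+\delta_{T_n}$ directly, splitting into the cases $(x,x^*)\in T$ and $(x,x^*)\notin T$; in the latter case it picks a ``witness'' $(y,y^*)\in T$ with $\langle y-x,y^*-x^*\rangle<0$, approximates it from the $T_n$, and concludes $(x_n,x_n^*)\notin T_n$ eventually, while in the former case it bounds $\liminf f_n$ from below by $\varphi_T$ via the Fitzpatrick definition. You instead first observe the identity $\varphi_S+\delta_S=\langle\cdot,\cdot\rangle+\delta_S$ on any monotone $S$, peel off the continuous pairing, and reduce the whole question to Painlev\'e--Kuratowski convergence of the graphs, the only nontrivial half being $\limsup T_n\subset T$; your argument for that inclusion is essentially the dual of the paper's Case~1. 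What your reduction buys is transparency: it makes explicit that the theorem is really a statement about set convergence of graphs, and it isolates the single place where maximal monotonicity of $T$ is used. The paper's approach, on the other hand, avoids the preliminary identity and the (mild) lemma that adding a continuous function preserves epi-convergence, and it keeps the Fitzpatrick function visibly in play, which matches the representative-function viewpoint of the rest of the article.
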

\begin{proof}
Since $T_n$ is a maximal monotone operator with convex graph, the function
$f_n := \varphi_{T_n} + \delta_{T_n}$ is indeed a representative function of $T_n$.
We show that the sequence $(f_n)$ epi-converges to $f = \varphi_{T} + \delta_{T}$,
that is, for every $(x,x^*) \in X\times X^*$,

(i) there exists $(x_n,x_n^*)\to (x,x^*)$ with
$f(x,x^*)=\displaystyle\lim_n f_n(x_n,x_n^*)$;

(ii) for every $(x_n,x_n^*)\to (x,x^*)$, we have
$f(x,x^*)\le \displaystyle\liminf_n f_n(x_n,x_n^*)$.\\
We consider two cases.
\smallbreak
\textit{Case 1:} $(x,x^*) \notin T$. Let $((x_n,x_n^*))\subset X\times X^*$
be any sequence converging to $(x,x^*)$.
Since $T$ is maximal monotone,
there exists $(y,y^*) \in T$ such that $\langle y-x,y^*-x^*\rangle<0$.
But $T=\liminf_n T_n$, so for each $n\in \N$ there exists $(y_n,y_n^*) \in T_n$
such that $(y_n,y_n^*)\to (y,y^*)$. By continuity of the duality product,
we derive that $\langle y_n-x_n,y^*_n-x_n^*\rangle<0$ for large $n$, which implies that
$(x_n,x_n^*) \notin T_n$ for large $n$, because $T_n$ is monotone.
Therefore,
\[
\lim_n f_n(x_n,x_n^*) = +\infty = f(x, x^*).
\]
This shows that (i) and (ii) are satisfied.

\smallbreak
\textit{Case 2:} $(x,x^*) \in T$. Assertion (i) is satisfied: indeed,
for each $n\in \N$, there exists  $(x_n,x_n^*) \in T_n$ such that
$(x_n,x_n^*) \to  (x,x^*)$, hence
\[
f_n(x_n,x_n^*) = \langle x_n^*, x_n \rangle  \to \langle x^*, x \rangle = f(x,x^*).
\]
To prove Assertion (ii), let $(x_n,x_n^*)\to (x,x^*)$.
By definition of $f_n$, we have
\begin{equation}\label{one2}
f_n(x_n,x_n^*) \geq \varphi_{T_n}(x_n,x_n^*) =
\sup_{(y,y^*) \in T_n} \langle y-x_n,x^*_n-y^*\rangle +\langle x_n, x^*_n\rangle.
\end{equation}
Fix $(y,y^*) \in T$. Then, for each $n\in \N$ there exists $(y_n,y_n^*) \in T_n$
such that $(y_n,y_n^*) \to (y,y^*)$. It follows from (\ref{one2}) that
\[
f_n(x_n,x_n^*) \geq \langle y_n-x_n,x^*_n-y_n^*\rangle +\langle x_n, x^*_n\rangle,
\]
hence
\[
\liminf_{n\to \infty} f_n(x_n,x_n^*) \geq
\langle y-x,x^*-y^*\rangle +\langle x, x^*\rangle.
\]
Since $(y,y^*) \in T$ was taken arbitrarily, we conclude that
\[
\liminf_{n\to \infty} f_n(x_n,x_n^*) \geq \varphi_{T}(x, x^*).
\]
But $\varphi_{T}(x, x^*)=f(x, x^*)$, because $(x,x^*) \in T$, so Assertion (ii) holds.
\end{proof}

Combining the two previous theorems, we immediately obtain:

\begin{theo}
Let $X$ be a finite-dimensional space and let
$(T_n)\subset X\times X^*$ be a sequence of maximal monotone operators with convex graph.
Then, $\liminf T_n$ is maximal monotone if and only if
there exists a sequence $(f_n)$ of representative functions of $(T_n)$
which epi-converges.
\end{theo}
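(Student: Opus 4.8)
The plan is to derive the statement directly by combining Theorem~\ref{reussi12017} and Theorem~\ref{CN-maxmono}, checking only that the hypotheses of those two results line up in the finite-dimensional setting. The two gluing facts are: a finite-dimensional space is simultaneously a Banach space and a reflexive Banach space, and on such a space epi-convergence and Mosco-convergence of sequences in $\Gamma(X\times X^*)$ coincide (as already recorded in the excerpt).

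For the necessity direction (``only if''), I would assume $T:=\liminf T_n$ is maximal monotone and simply invoke Theorem~\ref{CN-maxmono}: since $X$ is a Banach space and each $T_n$ is a maximal monotone operator with convex graph, that theorem produces the sequence $(\varphi_{T_n}+\delta_{T_n})$ of representative functions of $(T_n)$ and asserts that it epi-converges, namely to $\varphi_{T}+\delta_{T}$. This is precisely an epi-converging sequence of representative functions of $(T_n)$, which is what is required.

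For the sufficiency direction (``if''), I would start from an epi-converging sequence $(f_n)$ of representative functions of $(T_n)$, so that $f_n\in\F(X\times X^*)$ and $T_n=L(f_n)$ is maximal monotone for every $n$, and let $f$ be the epi-limit. Because $X$ is finite-dimensional, $f$ is also the Mosco-limit of $(f_n)$, so $(f_n)$ Mosco-converges to $f$; and because a finite-dimensional space is reflexive, Theorem~\ref{reussi12017}\,(2) applies verbatim and yields that $\liminf T_n=L(f)$ is maximal monotone.

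I do not expect a genuine obstacle here; the only point requiring care is bookkeeping about which property of the ambient space is used at each step (Banach for Theorem~\ref{CN-maxmono}, reflexive for Theorem~\ref{reussi12017}, finite-dimensional to pass between epi- and Mosco-convergence), so that the word ``immediately'' in the surrounding text is justified. It is also worth noting explicitly that the convex-graph hypothesis intervenes only in the necessity direction, to ensure through Theorem~\ref{CN-maxmono} that the canonical functions $\varphi_{T_n}+\delta_{T_n}$ are legitimate representative functions of $T_n$ and that they epi-converge; the sufficiency direction uses nothing beyond Theorem~\ref{reussi12017}\,(2) and the finite-dimensionality of $X$.
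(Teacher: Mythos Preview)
Your proposal is correct and matches the paper's approach exactly: the paper states the theorem immediately after Theorems~\ref{reussi12017} and~\ref{CN-maxmono} with only the remark ``Combining the two previous theorems, we immediately obtain,'' and your write-up spells out precisely this combination, including the finite-dimensional identification of epi- and Mosco-convergence needed to invoke Theorem~\ref{reussi12017}\,(2). Your side remarks about which hypothesis is used where are accurate and add nothing beyond clarification.
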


\section{Representation through subdifferentials}
The \textit{subdifferential} of $f\in \Gamma(X)$ is the set-valued operator
$\partial f:X\tos X^*$ defined by
$$
\partial f(x):=\{ x^*\in X^* : \langle x, x^*\rangle=f(x)+f^*(x^*) \}.
$$
Given $h \in\Gamma(X \times X^*)$, Fitzpatrick \cite{Fit88}
considered the operator $T_h\subset X \times X^*$ defined by
$$T_h:= \{(x,x^*)\in X \times X^* : (x^*, x)\in\partial h(x,x^*)\}.$$
It is readily seen that the function
$g(x,x^*):=(1/2)(h(x,x^*)+h^*(x^*,x))$ belongs to $\F(X\times X^*)$
and that $T_h=L(g)$.
Fitzpatrick \cite[Corollary 3.5]{Fit88} notes that
every maximal monotone operator $T$ can be represented this way,
namely $T=T_{\varphi_T}$.

\begin{theo}\label{limit2}
Let $X$ be a reflexive Banach space and let $(f_n) \subset \Gamma(X\times X^*)$
with $T_n:=T_{f_n}$ maximal monotone.
If $(f_n)$ Mosco-converges to $f$, then $T_f=\liminf T_n$ and $\liminf T_n$
is maximal monotone.
\end{theo}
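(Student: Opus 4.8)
The plan is to deduce the theorem from Theorem~\ref{reussi12017}\,(2). Writing $s(x,x^*):=(x^*,x)$, put $g_n:=(1/2)(f_n+f_n^*\circ s)$ and $g:=(1/2)(f+f^*\circ s)$. By the remarks preceding the statement, $g_n,g\in\F(X\times X^*)$, $L(g_n)=T_{f_n}=T_n$ and $L(g)=T_f$; moreover, since each $T_n$ is maximal monotone, Fact~\ref{BS03} gives $g_n\in\F^*(X\times X^*)$. So, once $(g_n)$ is shown to Mosco-converge to $g$, Theorem~\ref{reussi12017}\,(2) applied to $(g_n)$ gives at once $L(g)=\liminf T_n$ and $\liminf T_n$ maximal monotone, i.e.\ $T_f=\liminf T_n$ with $\liminf T_n$ maximal monotone. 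Everything therefore reduces to proving $g=\M\lim_n g_n$.

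The weak lower-limit half of this Mosco-convergence is immediate. By Fact~\ref{Mos71}, $f^*=\M\lim_n f_n^*$ in $\Gamma(X^*\times X)$, hence $f^*\circ s=\M\lim_n f_n^*\circ s$ in $\Gamma(X\times X^*)$ because $s$ is a linear homeomorphism onto $X^*\times X$. Thus, if $(x_n,x_n^*)$ converges weakly to $(x,x^*)$, then $(x_n^*,x_n)$ converges weakly to $(x^*,x)$, and condition (ii) of Mosco-convergence applied to $(f_n)$ and to $(f_n^*\circ s)$, together with the superadditivity of $\liminf$, yields $\liminf_n g_n(x_n,x_n^*)\ge(1/2)\big(f(x,x^*)+f^*(x^*,x)\big)=g(x,x^*)$.

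The recovery-sequence half (condition (i)) is the heart of the matter, and the step I expect to be the real obstacle. For $g(x,x^*)=+\infty$, the inequality just proved shows every strongly convergent sequence is a recovery sequence, so assume $g(x,x^*)<+\infty$. The difficulty is structural: Mosco-convergence is not, in general, preserved by the symmetrization $h\mapsto(1/2)(h+h^*\circ s)$, since the recovery sequences coming from $f_n\to f$ and from $f_n^*\to f^*$ are attached to different functions, whereas $g_n$ couples the slots $(x,x^*)$ and $(x^*,x)$. My proposal is to sidestep the direct construction by conjugating once more. By Fact~\ref{Mos71}, $g=\M\lim_n g_n$ is equivalent to $g^*=\M\lim_n g_n^*$; so it would suffice to verify, in addition to the weak lower-limit condition for $(g_n)$ already obtained, the analogous condition for $(g_n^*)$ — by the reflexive-space duality of epi-convergence underlying Fact~\ref{Mos71}, these two together force $g=\M\lim_n g_n$. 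A short computation with $s$ and the reverse swap $\sigma(x^*,x):=(x,x^*)$ gives, for $v\in X^*\times X$, the representation $g_n^*(v)=(1/2)\,\big(f_n^*\,\square\,(f_n\circ\sigma)\big)(2v)$, an infimal convolution of $f_n^*$ and $f_n\circ\sigma$; then, choosing near-optimal splittings $2v_n=p_n+q_n$ and passing to weak limits of $(p_n),(q_n)$ along a subsequence (with $p+q=2v$), one applies condition (ii) of $f^*=\M\lim f_n^*$ and of $f=\M\lim f_n$ to the two terms separately.

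The one quantitative input this requires — the genuine obstacle — is that the near-optimal splittings $(p_n,q_n)$ remain bounded, so that weakly convergent subsequences exist. This is exactly where the maximal monotonicity of the $T_n$ must be used in an essential way, through the extra inequality $g_n^*(x^*,x)\ge\langle x,x^*\rangle$ furnished by $g_n\in\F^*(X\times X^*)$ (Fact~\ref{BS03}): it is this coercive minorant on $g_n^*$ that prevents $f_n^*(p_n)$ and $f_n(\sigma q_n)$ from escaping to $\pm\infty$ with bounded sum. Granting it, $g=\M\lim_n g_n$ holds and Theorem~\ref{reussi12017}\,(2) concludes. I would keep two companion observations in reserve: that $\liminf T_n\subseteq T_f$ holds directly and independently of the norm (if $(x_n,x_n^*)\in T_n$ converges to $(x,x^*)$ then $(x_n^*,x_n)\in\partial f_n(x_n,x_n^*)$, and, the upper limit of the graphs of $\partial f_n$ being contained in the graph of $\partial f$ under epi-convergence, $(x^*,x)\in\partial f(x,x^*)$), so that by maximality it would even suffice to prove only that $\liminf T_n$ is maximal monotone; and that, for $(x,x^*)\in T_f$, the resolvent point of $\partial f_n$ based at $(x,x^*)$ is a natural candidate recovery sequence — it converges strongly to $(x,x^*)$ by Attouch's theorem and carries the right limiting values through the Fenchel--Young equality — though the boundedness/equi-continuity issue recurs there too.
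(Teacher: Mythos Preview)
Your reduction is exactly the paper's: set $g_n:=(1/2)(f_n+f_n^*\circ s)$ and $g:=(1/2)(f+f^*\circ s)$, note that $T_n=L(g_n)$ and $T_f=L(g)$, and invoke Theorem~\ref{reussi12017}\,(2) once $g=\M\lim_n g_n$ is established. The paper dispatches this last step in a single clause---``by Fact~\ref{Mos71}, the sequence $(g_n)$ Mosco-converges to $g$''---without further argument. You are right that this is not automatic: Mosco-convergence is not stable under sums, and Fact~\ref{Mos71} only yields $f_n^*\to f^*$ Mosco, not that the symmetrization converges. Without the hypothesis that each $T_n$ be maximal monotone the implication actually fails: in $\R\times\R$, take $f_n=\delta_{\{(x,y):\,y=nx\}}$; then $f_n\to f:=\delta_{\{0\}\times\R}$ Mosco, yet $g_n=\delta_{\{(0,0)\}}$ for every $n$, which does not converge to $g=\delta_{\{0\}\times\R}$. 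So you have correctly located the substantive step that the paper's one-line proof elides.

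Your own proposal, however, does not close the gap. The duality principle you invoke---that the weak-liminf inequality for both $(g_n,g)$ and $(g_n^*,g^*)$ forces Mosco-convergence---is correct (it is essentially one direction in the proof of Fact~\ref{Mos71}). But your verification of the conjugate inequality is incomplete precisely where you flag it: the boundedness of the near-optimal splittings $(p_n,q_n)$ is not established, and the minorant $g_n^*(x^*,x)\ge\langle x,x^*\rangle$ controls only the \emph{value} of the infimal convolution along $(v_n)$, not the \emph{location} of a near-minimizer---nothing you have written prevents $f_n^*(p_n)\to-\infty$ and $f_n(\sigma q_n)\to+\infty$ with $\|p_n\|\to\infty$. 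There is also a smaller wrinkle: the identity $g_n^*(v)=(1/2)\big(f_n^*\,\square\,(f_n\circ\sigma)\big)(2v)$ holds in general only as the inequality $\le$, so one must pass through the lower-semicontinuous hull of the right-hand side; this is repairable but should be said. Your reserve observation that $\liminf T_n\subset T_f$ always holds is correct, but turning it into a proof via maximality still requires showing that $\liminf T_n$ is maximal monotone---the very conclusion at stake.
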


\begin{proof} 
Let $g_n(x,x^*):=(1/2)(f_n(x,x^*)+f^*_n(x^*,x))$. Then,
$g_n\in \F(X\times X^*)$, $T_n=L(g_n)$ is maximal monotone
and by Fact \ref{Mos71}, the sequence $(g_n)$ Mosco converges to the function
$g(x,x^*):=(1/2)(f(x,x^*)+f^*(x^*,x))$.
Hence, by Theorem \ref{reussi12017},
$\liminf T_n=L(g)=T_f$, and $\liminf T_n$ is
maximal monotone.
\end{proof}

\begin{cor}[Attouch \cite{Att77}] Let $X$ be a reflexive Banach space and
let $(g_n) \subset \Gamma(X)$.
If $(g_n)$ Mosco-converges to $g$,
then
$\partial g =\liminf \partial g_n$.
\end{cor}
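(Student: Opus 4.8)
The plan is to deduce the corollary from Theorem \ref{limit2} by transferring everything to the product space $X\times X^*$ via the \emph{separable sum} construction. For $g\in\Gamma(X)$ put
$$
h_g(x,x^*):=g(x)+g^*(x^*),\qquad (x,x^*)\in X\times X^*.
$$
(We may assume all functions involved are proper; the improper case is trivial.) First I would record two facts about $h_g$. Since $g\in\Gamma(X)$ and $X$ is reflexive, $g^*\in\Gamma(X^*)$ and $g^{**}=g$, so $h_g\in\Gamma(X\times X^*)$. Moreover $T_{h_g}$, viewed as a subset of $X\times X^*$, equals the graph of $\partial g$: because $h_g$ is a separable sum, its subdifferential (taken in the dual $X^*\times X$ of $X\times X^*$) splits as $\partial h_g(x,x^*)=\partial g(x)\times\partial g^*(x^*)$, hence $(x^*,x)\in\partial h_g(x,x^*)$ if and only if $x^*\in\partial g(x)$ and $x\in\partial g^*(x^*)$; by the Moreau reciprocity formula these two conditions are each equivalent to $g(x)+g^*(x^*)=\langle x,x^*\rangle$, so $T_{h_g}=\{(x,x^*): x^*\in\partial g(x)\}=\partial g$.

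Next I would set $f_n:=h_{g_n}$ and $f:=h_g$. By the first step $T_{f_n}=\partial g_n$, which is maximal monotone by Rockafellar's theorem \cite{Roc70}; this is exactly the hypothesis of Theorem \ref{limit2}. It then remains to check that $(f_n)$ Mosco-converges to $f$. Since $(g_n)$ Mosco-converges to $g$, Fact \ref{Mos71} gives that $(g_n^*)$ Mosco-converges to $g^*$. Mosco convergence is preserved under separable sums on a product space: for the recovery condition (i) one concatenates a strongly convergent recovery sequence for $g$ with one for $g^*$; for the lower-bound condition (ii) one uses that a weakly convergent sequence $((x_n,x_n^*))$ in $X\times X^*$ has weakly convergent coordinates, together with the superadditivity of $\liminf$ (which applies here because, by condition (ii) for $g_n$ and $g_n^*$, the two coordinate liminf's are bounded below by $g(x)$ and $g^*(x^*)$ respectively, hence are $>-\infty$). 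Thus $(f_n)$ Mosco-converges to $f$.

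With these ingredients in place, Theorem \ref{limit2} gives $T_f=\liminf T_n=\liminf\partial g_n$, and moreover $\liminf\partial g_n$ is maximal monotone; since $T_f=\partial g$ by the first step, we conclude $\partial g=\liminf\partial g_n$, which is the assertion of the corollary.

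The only points needing a little care are the identification $T_{h_g}=\partial g$ — resting on the additivity of the subdifferential for separable sums and on Moreau's identity — and the stability of Mosco convergence under separable sums; both are standard, so I expect no genuine obstacle, the real content being already packaged in Theorem \ref{limit2}.
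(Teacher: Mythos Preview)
Your proposal is correct and follows essentially the same route as the paper: define $f_n(x,x^*)=g_n(x)+g_n^*(x^*)$, identify $T_{f_n}=\partial g_n$, use Mosco's duality (Fact~\ref{Mos71}) to obtain Mosco convergence of the $f_n$, and then apply Theorem~\ref{limit2}. The only difference is that you spell out in more detail the identification $T_{h_g}=\partial g$ and the stability of Mosco convergence under separable sums, whereas the paper simply cites \cite[Example~3]{Fit88} for the former and invokes Fact~\ref{Mos71} without further comment for the latter.
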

\begin{proof}
Let $f_n(x,x^*)= g_n(x)+g_n^*(x^*)$. Then $f_n\in \Gamma(X\times X^*)$ and
$T_{f_n}=\partial g_n$ (\cite[Example 3]{Fit88}).
By Fact \ref{Mos71}, $(f_n)$ Mosco-converges to $f$ given by
$f(x,x^*)= g(x)+g^*(x^*)$. Since $T_{f_n}=\partial g_n$ is maximal
monotone, we may apply Theorem \ref{limit2} to derive
that $T_f=\liminf T_n$. As above $T_f=\partial g$.
In other words, $\partial g =\liminf \partial g_n$.
\end{proof}

{\small

\end{document}